\newenvironment{proof}{\noindent {\bf Proof:}}{\hfill $\Box$}
\newtheorem{lemma}{Lemma}
\newtheorem{proposition}{Proposition}
\newtheorem{remark}{Remark}
\newtheorem{example}{Example}
\newcommand{\R}{\mathbb{R}} 
\newcommand{\N}{\mathbb{N}} 
\def\M{\mathbf{M}}
\def\b{\boldsymbol{b}}
\def\p{\boldsymbol{p}}
\def\q{\boldsymbol{q}}
\def\v{\boldsymbol{v}}
\begin{document}
\title{An infinite-dimensional Christoffel function and detection of abnormal trajectories\footnote{Didier Henrion is supported by the European Union under the project ROBOPROX (reg. no.
	CZ.02.01.01/00/22 008/0004590).
	J.B. Lasserre is supported by the AI Interdisciplinary Institute ANITI funding through the French program
 ``Investing for the Future PI3A'' under the grant agreement number ANR-19-PI3A-0004. He also affiliated with
	IPAL-CNRS laboratory, Singapore.}
}

\author{Didier Henrion$^{1,2}$, Jean Bernard Lasserre$^{1,3}$}

\footnotetext[1]{CNRS; LAAS; Universit\'e de Toulouse, 7 avenue du colonel Roche, F-31400 Toulouse, France. }
\footnotetext[2]{Faculty of Electrical Engineering, Czech Technical University in Prague, Technick\'a 2, CZ-16626 Prague, Czechia.}
\footnotetext[3]{Toulouse School of Economics (TSE); Toulouse, France. }

\date{June 2024}

\maketitle
\begin{abstract}
We introduce an infinite-dimensional version of the Christoffel function, where now (i) its argument lies in a Hilbert space of functions, and (ii) its associated underlying measure is supported on a compact subset of the Hilbert space. We show that it possesses the same crucial property as its finite-dimensional version to identify the support of the measure (and so to detect outliers). Indeed, the growth of its reciprocal with respect to its degree is at least exponential outside the support of the measure and at most polynomial inside. Moreover, for a fixed degree, its computation mimics that of the finite-dimensional case, but now 
the entries of the moment matrix associated with the measure are moments of moments. To illustrate the potential of this new tool, we consider the following application. Given a data base  of registered reference trajectories, we consider the problem of detecting whether a newly acquired trajectory is abnormal (or out of distribution) with respect to the data base. As in the finite-dimensional case, we use the infinite-dimensional Christoffel function as a score function to detect outliers and abnormal trajectories. A few numerical examples 
are provided to illustrate the theory.
\end{abstract}

\section{Motivation}

Suppose that we are given a set of $N$ trajectories
$\mathscr{D}:=\{(t,g_j(t)): t\in [0,1]\,;\: j\in [N]\,\}\subset [0,1]\times\R$, where $[N]:=\{1,\ldots,N\}$
and $g_j\in L^2([0,1])$ for all $j$.
For example,
each $j\in [N]$ refers to some patient and $\{(t,g_j(t)): t\in [0,1]\}$ is some recorded
data from that patient (e.g. time evolution of blood pressure after injection of some liquid product). 
The set $\mathscr{D}$ can correspond 
to a class of normal patients (e.g. with no pathology), or a class of patients 
whose time evolution of the observed quantity, has a specific characteristic.
Then the problem that we want to address is:

\emph{Given a data base $\mathscr{D}$ of trajectories associated with some class of
patients, detect whether a newly registered trajectory $\{(t,f(t)):t\in [0,1]\}$ of a new patient, 
is abnormal (with respect to this class of patients), or not}. \\

One possible approach is to compute a \emph{score function} of the form
\begin{equation}
\label{intro-norm}
\rho(f)=\min_{g\in\mathscr{D}}\Vert  g-f\Vert_2 \quad\left(\mbox{e.g.}\quad \min_{g\in\mathscr{D}} \int_0^1 (g(t)-f(t))^2\,dt\,\right)^{1/2}\,,
\end{equation}
and decide abnormality of $f$ w.r.t. $\mathscr{D}$, depending on how large is the score $\rho(f)$. In this approach,
computing the score $\rho(f)$ requires to compare $f$ with 
\emph{every} $g\in\mathscr{D}$ for every new trajectory $f$.  Moreover, to be successful, such a test requires the data base $\mathscr{D}$ to be perfect, i.e., with no outlier.

On the other hand, a highly desirable solution is a numerical test obtained via: 

-- (i) some score function $\theta:L^2([0,1])\to\R$, 
$f\mapsto\theta(f)$, 
\emph{computed once and for all},

-- (ii) a threshold $\tau$, such that $\theta(g)\leq \tau$ for all $g\in\mathscr{D}$, so that $f$ is declared abnormal if $\theta(f)>\tau$. If not then the database $\mathscr{D}$ is possibly enriched with the new trajectory
$\{(t,f(t): t\in [0,1]\}$ (considered to be normal), i.e., $\mathscr{D}:= \mathscr{D}\cup\{f\}$. 

In addition:

-- (iii) one should be able to compute the score function $\theta$ efficiently,  and 

-- (iv) update the score function $\theta$
efficiently when additional trajectories are added to (or deleted from) $\mathscr{D}$.\\

\noindent
Motivated by previous works in \cite{neurips,lpp22} where properties of the Christoffel function 
have been proved to be useful for some problems of data analysis (e.g. for outlier detection \cite{louise,neurips}, interpolation \cite{recovery}, optimal transport \cite{mula}) 
we propose an infinite-dimensional version of this approach where now
a point of $\R^n$ is replaced with a trajectory (i.e., a function of time) and the cloud of data points
(a subset of $\R^n$) is now replaced with a database $\mathscr{D}$ of recorded trajectories.

It turns out that there is an appropriate analytical framework, an infinite-dimensional Hilbert space,
in which:

-- An infinite-dimensional analogue of the Christoffel-Darboux (CD) polynomial kernel and Christoffel function (CF) can be defined in relatively simple terms.

-- The dichotomy property of the CF's growth with its degree, depending on whether the CF is evaluated in the support or outside the support of the underlying measure, is also preserved in the infinite-dimensional setting. This property is crucial to detect anomalies
by our numerical scheme, the analogue of the one  proposed in \cite{louise,lpp22} in the finite-dimensional setting. 

That these two essential features are preserved is somehow a little surprising because the infinite-dimensional analogues of polynomials, measures and their support are not trivial extensions of the finite-dimensional case and require some care, even in an appropriate Hilbert-space setting. This setting was initiated in \cite{hikv23,hr24} for questions related to dynamical systems, non-linear partial differential equations and infinite-dimensional optimization. The present work can be seen as an additional contribution to this body of works.

A Matlab prototype code illustrates the developments. It uses the Chebfun package \cite{chebfun}, an object-oriented system for performing numerical linear algebra on continuous functions and operators rather than the usual discrete vectors and matrices.

\begin{remark}
\label{rem:naive}
Warning: In a  naive approach that mimicks the finite-dimensional case, one could:

-- consider the set $\Omega\subset\mathbb{R}^2$ occupied by the trajectories of $\mathscr{D}$ (or its finite version of points in $\mathbb{R}^2$ via samples of trajectories),

-- define a measure $\mu$ on $\Omega$ (e.g. the Lebesgue measure on $\Omega$, or the empirical measure supported on the sampled points of trajectories), its associated CF
$\Lambda^{\mu}_d$ (the reciprocal of a degree-$2d$ bivariate sum-of-squares polynomial),  and a threshold $\delta>0$, 

-- and decide that $f$ is an outlier if 
$\mu(\{t: \Lambda^\mu_d(t,f(t))<\delta\})$ is sufficiently large.

Indeed in such an approach, all trajectories contained in $\Omega$ would be declared inliers, which is obviously not true. The reason why  
this test fails (except when some portion of the trajectory $f$ is indeed outside $\Omega$) is 
that in contrast to \eqref{intro-norm}, the score function
$\Lambda^{\mu}_d(t,f(t))$  applies to \emph{each point} $\{(t,f(t)\}$ of the trajectory $f$, \emph{separately}.
In other words, at every time $t\in [0,1]$,
$\min_{g\in\mathscr{D}} \vert f(t)-g(t)\vert$ can be small even if $\rho(f)$ in \eqref{intro-norm}
is not small.

So  we need
a score function $f\mapsto \theta(f)$ for the entire trajectory $\{(t,f(t)): t\in [0,1]\}$ taken as a whole and not point by point.
\end{remark}

\section{Preliminaries}

Some of the material of this section has been developed in more details in \cite{h24}. It is reproduced for sake of clarity of exposition.
Let $H$ be a separable real Hilbert space, equipped with an inner product $\langle .,.\rangle$ and let $(e_k)_{k=1,2,\ldots}$ be a complete orthonormal system in $H$, with $e_1=1$.  For instance, in the framework of the next section (and also the introduction), a natural choice for $H$ can be :

-- $L^2([-1,1],\phi)$ with $\phi=dt/\pi\sqrt{1-t^2}$ and $e_k$ is the degree $k-1$ Chebyshev polynomial of the first kind (normalized to obtain an orthonormal basis), or

-- $L^2([0,1],\phi)$ with $\phi=dt$ and $e_k$ is the degree $k-1$ (shifted) Legendre polynomial 
(normalized to obtain an orthonormal basis).

It is also possible to consider complex valued and periodic functions. In this case $e_k$ could be a complex exponential $\text{exp}(2\pi{\bf i}k t)$.
In this paper, for ease of exposition, however we restrict our attention  to real non-periodic functions.

For any $n \in \N$, consider the projection mapping
\[\begin{array}{llll}
\pi_n : & H & \to & H \\
& f & \mapsto & \displaystyle\sum_{k=1}^n \langle f,e_k \rangle\, e_k.
\end{array}\]
In particular, note that $f = \lim_{n\to\infty} \pi_n(f) = \sum_{k=1}^{\infty} \langle f,e_k \rangle\, e_k$ (in norm). Also note that
\begin{equation}\label{np}
|\pi_n(f)|^2 \,=\, \sum_{k=1}^{n} \langle f, e_k \rangle^2\quad\mbox{and}\quad
|f|^2= \lim_{n\to\infty} |\pi_n(f)|^2 \,=\, \sum_{k=1}^{\infty} \langle f, e_k \rangle^2.
\end{equation}

\subsection{Polynomials}

Let $c_0(\N)$ denote the set of integer sequences with finitely many non-zero elements, i.e. if $a=(a_1,a_2,\ldots) \in c_0(\N)$ then $\text{card}\:a := \{k : a_k \neq 0\}< \infty$.
Let us define the {\it monomial} of degree $a \in c_0(\N)$ as
\[
f^a := \prod_{k=1,2,\ldots} {\langle f, e_k \rangle}^{a_k}.
\]
This is a product of finitely many powers of linear functionals. Then {\it polynomials} in $H$ are defined as finite linear combinations of monomials $f^a$, that is:
\[
f\mapsto p(f) = \sum_{a \in \text{spt}\:p} p_a f^a\,,\quad \forall f\,\in\,H\,,
\]
where $\text{spt}\:p:=\{a:\,p_a\neq0\}$ is a finite set.
An important difference with the finite-dimensional setting
is that a polynomial $p$ on $H$ has now two notions of degree.

-- Its \emph{algebraic degree} is $d:=\max_{a \in \text{spt}\:p} \sum_k\:a_k$, which
corresponds to the total degree in the finite-dimensional setting, and

-- its  \emph{harmonic degree} is $n:=\max_{a \in \text{spt}\:p} \max_{a_k \neq 0} k$, which 
refers to its finite number of variables.

For instance if $H=L^2([-1,1],\phi)$ with $\phi=dt/\pi\sqrt{1-t^2}$ (the Chebyshev measure) and $e_1(t)=1,e_2(t)=\sqrt{2}t,e_3(t)=\sqrt{2}(2t^2-1),\ldots$, the Chebyshev polynomials of the first kind (normalized to make them an orthonormal basis w.r.t. $\phi$), then 
\begin{equation}
    \label{exemple}
    f\mapsto p(f)\,=\,\displaystyle\sum_{a\in\text{spt}\:p}
p_a\,\prod_{k=1,2,\ldots}\left(\int_{-1}^1 f(t)\,e_k(t)\,dt\right)^{a_k}\,,\quad\forall f\in H\,.\end{equation}

Given $d,n \in \N$, let $P_{d,n}$ denote the finite-dimensional vector space of polynomials of algebraic degree up to $d$ and harmonic degree up to $n$. 
Like $\mathbb{R}[x_1,\ldots,x_n]_d$ in the finite-dimensional setting, its dimension is the binomial coefficient ${n+d\choose n}$. 
Each polynomial $p(.) \in P_{d,n}$ can be identified with its coefficient vector $\p:=(p_a)_{a \in \text{spt}\:p} \in \R^{n+d\choose n}$. 
For example, if $d=4$ and $n=2$, then $P_{4,2}$ has dimension  ${6\choose 2}=15$. The monomial of degree $a=(3,1,0,0, \ldots)$ is $\langle x,e_1\rangle^3\langle x,e_2\rangle$. It belongs to $P_{4,2}$ since it has algebraic degree $4$ and harmonic degree $2$.

\begin{remark}
    A polynomial $f\mapsto p(f)$, with $f\in H$, is 
    a sum of product of integral terms (e.g. as in \eqref{exemple}) and should not be confused with the \emph{function} $t\mapsto p(f(t))$, which is en element of $H$ (as $f(t)$ is) with its own development $\sum_{k=1,2,\ldots} a_k\,e_k$
    in the orthonormal basis $(e_k)_{k=1,2,\ldots}$.  For instance with $H=L^2([-1,1],\phi)$, $\phi$ the Chebyshev measure on $[-1,1]$ and $(e_k)_{k=1,2,\ldots}$ the (normalized) Chebyshev polynomials, if $t\mapsto f(t)=\sqrt{2} t$,
    the function $t\mapsto f(t)^2=2t^2$ is the element $\sqrt{2}e_3/2+e_1$ of $H$. On the other hand with $a=(0,2,0,0,\ldots)$,
    \[f\mapsto p(f):=\,f^a\,=\,\langle f,e_2\rangle^2\,=\,\left(\int_{-1}^1f(t)\,\sqrt{2}tdt/\pi\sqrt{1-t^2}\right)^2\,,\]
    which when evaluated at $t\mapsto f(t)=\sqrt{2}t = e_2 \in H$,
    yields the value 
        \[\left(\int_{-1}^1 2t^2dt/\pi\sqrt{1-t^2}\right)^2\,=\,\Vert e_2\Vert^2\,=\,1\,.\]    
\end{remark}

\subsection{Moments}

With $F$ a compact subset of $H$, let $C(F)$ denote the space of continuous functions on $F$, and let $P(F) \subset C(F)$ denote the space of polynomials on $F$.

A useful characterization of compact sets in separable Hilbert spaces is as follows \cite[item 45, p.346, IV.13.42]{Dunford}.
\begin{proposition}\label{compact}
A closed bounded set $F \subset H$ is compact if and only if for all $\epsilon \in \R$ there exists $n \in \N$ such that $\sup_{f \in F} |f|^2-|\pi_n(f)|^2 < \epsilon^2$.
\end{proposition}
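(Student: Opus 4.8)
The statement to prove is Proposition~\ref{compact}, a known characterization of compact subsets of a separable Hilbert space. The plan is as follows.

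\textbf{Setup and strategy.} Since $H$ is a separable Hilbert space with complete orthonormal system $(e_k)_k$, any $f\in H$ is determined by its coordinate sequence $(\langle f,e_k\rangle)_k\in\ell^2$, and $|f|^2=\sum_k\langle f,e_k\rangle^2$ while $|\pi_n(f)|^2=\sum_{k=1}^n\langle f,e_k\rangle^2$; hence $|f|^2-|\pi_n(f)|^2=\sum_{k>n}\langle f,e_k\rangle^2$ is the squared norm of the tail. The condition in the proposition says precisely that the tails of the coordinate expansions go to zero \emph{uniformly} over $F$, i.e. $\sup_{f\in F}\sum_{k>n}\langle f,e_k\rangle^2\to0$ as $n\to\infty$. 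So the claim is: a closed bounded set is compact iff it has uniformly small tails. I would prove the two implications separately.

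\textbf{($\Rightarrow$) Compactness implies uniform tail decay.} Suppose $F$ is compact and fix $\epsilon>0$. The functions $f\mapsto |f|^2-|\pi_n(f)|^2$ are continuous on $F$, nonnegative, and decrease pointwise to $0$ as $n\to\infty$ (since $\pi_n(f)\to f$ in norm). By Dini's theorem, monotone pointwise convergence of continuous functions to a continuous limit on a compact set is uniform; hence there exists $n$ with $\sup_{f\in F}(|f|^2-|\pi_n(f)|^2)<\epsilon^2$. (Alternatively, one can avoid Dini and argue directly: cover $F$ by finitely many balls $B(f_i,\epsilon/3)$, choose $n$ large enough that $|f_i|^2-|\pi_n(f_i)|^2<(\epsilon/3)^2$ for each of the finitely many centers, and use that $|f|^2-|\pi_n(f)|^2=|f-\pi_n(f)|^2$ together with the fact that $f\mapsto f-\pi_n(f)=(\mathrm{Id}-\pi_n)(f)$ is $1$-Lipschitz to transfer the bound from $f_i$ to nearby $f$.)

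\textbf{($\Leftarrow$) Uniform tail decay implies compactness.} Assume $F$ is closed, bounded (say $|f|\le R$ on $F$), and has uniformly small tails. Since $H$ is complete and $F$ is closed, it suffices to show $F$ is totally bounded. Fix $\epsilon>0$; pick $n$ with $\sup_{f\in F}(|f|^2-|\pi_n(f)|^2)<(\epsilon/2)^2$. The image $\pi_n(F)$ lies in the finite-dimensional space $\mathrm{span}(e_1,\dots,e_n)$ and is bounded (by $R$), hence is totally bounded there; cover it by finitely many balls of radius $\epsilon/2$ centered at points $\pi_n(f_1),\dots,\pi_n(f_m)$ with $f_i\in F$. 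Then for any $f\in F$, choosing $i$ with $|\pi_n(f)-\pi_n(f_i)|<\epsilon/2$, we get $|f-f_i|\le|f-\pi_n(f)|+|\pi_n(f)-\pi_n(f_i)|+|\pi_n(f_i)-f_i|<\epsilon/2+\epsilon/2+\epsilon/2$, which after choosing $n$ for the bound $(\epsilon/3)^2$ instead gives $|f-f_i|<\epsilon$. So $F$ is covered by finitely many $\epsilon$-balls, proving total boundedness, hence compactness.

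\textbf{Main obstacle.} Neither direction is deep; the only point requiring a little care is the uniformity in ($\Rightarrow$) — making sure one genuinely gets a single $n$ that works for all $f\in F$ simultaneously — which is exactly where either Dini's theorem or the explicit finite-cover-plus-Lipschitz argument is needed. Everything else is routine: identifying the tail with $|f-\pi_n(f)|^2$, and using finite-dimensional total boundedness for the converse.
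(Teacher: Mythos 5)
Your proof is correct. Note that the paper itself does not prove this proposition at all --- it simply cites Dunford--Schwartz (IV.13.42) --- so you are supplying an argument where the paper gives only a reference. Both directions of your argument are sound and standard: the identification $|f|^2-|\pi_n(f)|^2=|f-\pi_n(f)|^2$ reduces the condition to uniform tail decay; Dini's theorem (or the finite-net-plus-$1$-Lipschitz argument, which is also valid since $\mathrm{Id}-\pi_n$ is an orthogonal projection of norm at most $1$) handles the forward direction; and total boundedness via the finite-dimensional projection plus completeness of the closed set $F$ handles the converse. The only cosmetic slip is switching from $\epsilon/2$ to $\epsilon/3$ midway through the converse, which you already flag and correct yourself.
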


An example of a compact set is the ellipsoid
\[
	\{f \in H : \sum_{k=1,2,\ldots} a_k \langle f,e_k \rangle^2 \leq 1\}
\]	
if the sequence  $(a_k)_{k=1,2,\ldots}$ is strictly positive and strictly increasing, e.g. $a_k = k $. Another example of a compact set  is the Hilbert cube
\[
	\{f \in H : |\langle f,e_k \rangle| \leq \frac{1}{k}, \: k=1,2,\ldots\}
	\]
see \cite[Item 70 p. 350]{Dunford}.

For an introduction to measures on infinite-dimensional spaces (either as countably additive functions on a sigma algebra, or as dual to continuous functions, or both),
the interested reader is referred to e.g. \cite{rf10}.
Given a measure $\mu$ supported on $F$, and given $a \in c_0(\N)$, the {\it moment} of $\mu$ of degree $a$ is the number
\[
\int_F f^a \,d\mu(f)\,.
\]
A measure on $H$ is uniquely determined by its (countably infinite) sequence of moments.
\begin{proposition}
	Let $\mu_1$ and $\mu_2$ be measures on $F$ such that
	\[
	\int_F f^a \,d\mu_1(f) \,=\, \int_F f^a \,d\mu_2(f)
	\]
	for all $a \in c_0(\N)$. Then $\mu_1= \mu_2$.
\end{proposition}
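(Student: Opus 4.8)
The plan is to show that the two measures agree on a set of test functions rich enough to separate measures on the compact set $F$, and then invoke a Stone--Weierstrass/Riesz argument. First I would observe that since $F$ is compact in $H$, by Proposition~\ref{compact} the coordinate functionals $f \mapsto \langle f, e_k\rangle$ are uniformly bounded on $F$ (indeed $|\langle f,e_k\rangle| \le |f| \le M$ for some $M$), so every monomial $f^a$ is a bounded continuous function on $F$, hence $\mu_1$-integrable and $\mu_2$-integrable; the hypothesis is therefore meaningful and says that $\mu_1$ and $\mu_2$ induce the same linear functional on the algebra $P(F)$ of polynomials restricted to $F$.

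Next I would verify that $P(F)$ is a subalgebra of $C(F)$ that separates points of $F$ and contains the constants (the constant $1 = f^0$ is the monomial with $a=0$, and $e_1 = 1$ by assumption). Separation of points is the key structural fact: if $f \ne g$ in $H$, then $\langle f, e_k\rangle \ne \langle g, e_k\rangle$ for some $k$ because $(e_k)$ is a complete orthonormal system, and the degree-$a$ monomial with $a = (\,\ldots,1,\ldots)$ in position $k$ is exactly that functional, so it takes different values at $f$ and $g$. By the Stone--Weierstrass theorem, $P(F)$ is dense in $C(F)$ for the uniform norm.

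Then the conclusion follows from the density plus a uniform integrability bound: for any $h \in C(F)$ and any $\varepsilon>0$ pick $p \in P(F)$ with $\|h-p\|_{\infty,F} < \varepsilon$; since $\mu_1(F)$ and $\mu_2(F)$ are finite (measures on a compact set; if needed one first equates total masses by taking $a=0$ in the hypothesis), we get
\[
\left| \int_F h\,d\mu_1 - \int_F h\,d\mu_2 \right|
\le \int_F |h-p|\,d\mu_1 + \left|\int_F p\,d\mu_1 - \int_F p\,d\mu_2\right| + \int_F |h-p|\,d\mu_2
\le \varepsilon\,(\mu_1(F)+\mu_2(F)),
\]
and the middle term vanishes by hypothesis. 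Letting $\varepsilon \to 0$ shows $\int_F h\,d\mu_1 = \int_F h\,d\mu_2$ for all $h \in C(F)$, and since a (finite, Radon) measure on the compact metric space $F$ is determined by its action on $C(F)$ via the Riesz representation theorem, $\mu_1 = \mu_2$.

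The main obstacle, and the only place requiring genuine care, is the first step: making sure the monomials really are continuous and bounded on $F$, which is where compactness of $F$ (through Proposition~\ref{compact}, or simply through boundedness in $H$ together with continuity of each fixed finite product of coordinate functionals) is essential. The rest is a routine Stone--Weierstrass-plus-Riesz argument; one should also note that $F$, being a compact subset of a separable Hilbert space, is a compact metric space, so that Stone--Weierstrass and the Riesz representation theorem apply in their standard form.
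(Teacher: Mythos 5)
Your proposal is correct and follows essentially the same route as the paper: show that the polynomials on $F$ form a subalgebra of $C(F)$ that separates points and contains the constants, apply Stone--Weierstrass to get uniform density, and conclude that the two measures agree on $C(F)$ and hence coincide. The only cosmetic difference is that you close with the Riesz representation theorem on the compact metric space $F$, whereas the paper cites a result of da Prato for the final identification of measures agreeing on bounded continuous functions; your added details on point separation and the $\varepsilon$-estimate are fine.
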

\begin{proof}
First observe that the set  of polynomials on $F$ is an algebra (i.e. the product of two elements of $P(F)$ is an element of $P(F)$) that separates points (i.e. for all $f_1 \neq f_2 \in F$, there is a $p \in P(F)$ such that $p(f_1) \neq p(f_2)$) and that contains constant functions (corresponding to an empty support). From the Stone-Weierstrass Theorem \cite[Section 12.3]{rf10} in (not necessarily finite-dimensional) topological Hausdorff spaces, it follows that
$P(F)$ is dense in $C(F)$.

Now, if all moments of $\mu_1$ and $\mu_2$ coincide, then 
\[
\int_F p(f) \,d\mu_1(f) \,=\, \int_F p(f) \,d\mu_2(f)\,,
\]	
for all polynomials on $F$, and by density, for all continuous functions on $F$.

Since continuous functions are bounded on a compact set $F$, we can use \cite[Prop. 1.5]{d06} to conclude.
\end{proof}

\section{Infinite-dimensional Christoffel-Darboux polynomial}

Let $\mu$ be a given probability measure on a given compact set $F\subset H$.
Given $d,n \in \N$, the finite-dimensional vector space $P_{d,n}$ of polynomials of algebraic degree up to $d$ and harmonic degree up to $n$ is a Hilbert space once equipped with the inner product 
\[\langle p,q \rangle_{\mu}:=\int_F p(f)q(f)\,d\mu(f).\] 
Let $\v_{d,n}(.)$ denote a basis for $P_{d,n}$, of dimension $n+d\choose n$. Any element $p \in P_{d,n}$ 
can be expressed as $p(.) = \p^T \v_{d,n}(.)$ with $\p$ a vector of coefficients.

Let
\begin{equation}
    \label{matrix-mom}
\M_{d,n}(\mu) \,:=\, \int_F \v_{d,n}(f)\,\v_{d,n}(f)^T \,d\mu(f)
\end{equation}
be the \emph{moment matrix} of order $(d,n)$ of measure $\mu$, which is the Gram matrix of the inner products of pairwise entries of vector $\v_{d,n}(.)$. This matrix is positive semi-definite of size $n+d\choose n$.
If it is non-singular, then via its singular value decomposition, it can be written as
\[
\M_{d,n}(\mu) \,=\, Q S Q^T \,=\, \sum_{i=1}^{n+d\choose n} s_{ii}\, \q_i \q^T_i 
\]
where $S=(s_{ij})$ is the diagonal matrix of singular values, $q_i(f):=\q^T_i \v_{d,n}(f)$, and $\q^T_i \M_{d,n}(\mu) \q_j=s_{ii}$ if $i=j$ and $0$ if $i \neq j$.

Assuming that $\M_{d,n}(\mu)$ is non singular, the Christoffel-Darboux (CD) polynomial kernel is then defined by:
\begin{equation}
    \label{cd-kernel}
K^{\mu}_{d,n}(f,g):= \sum_{i=1}^{n+d\choose n} s_{ii}^{-1}\, q_i(f) q_i(g) \,=\, 
\v_{d,n}(f)^T\M_{d,n}(\mu)^{-1}\,\v_{d,n}(g)\,,\quad\forall x,y\,\in\,H\,.
\end{equation}
So the arguments $f,g\in H$ of $K^\mu_{d,n}$ are infinite-dimensional objects. However, for each couple $(d,n)$, $K^\mu_{d,n}$ only sees finitely many of their components.

The CD polynomial  is defined as 
the diagonal of the kernel, that is:
\begin{equation}
    \label{cd-polynomial}
f\mapsto p^{\mu}_{d,n}(f) \,:=\, K^{\mu}_{d,n}(f,f)\,,\quad\forall f\,\in\,H\,.
\end{equation}
Observe that $p^\mu_{d,n}$ is a sum-of-squares polynomial with a Gram matrix which is the inverse of the degree $(d,n)$ moment matrix associated with $\mu$.

\begin{lemma}
The vector space $P_{d,n}$, equipped with $\langle\cdot,\cdot\rangle_\mu$ and the CD kernel $K^\mu_{d,n}$, is a RKHS (reproducible kernel Hilbert space).
\end{lemma}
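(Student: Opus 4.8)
The plan is to verify the two defining ingredients of a RKHS: that $(P_{d,n},\langle\cdot,\cdot\rangle_\mu)$ is a genuine Hilbert space of functions, and that $K^\mu_{d,n}$ is a reproducing kernel for it. For the first point, recall that $P_{d,n}$ is a vector space of dimension ${n+d\choose n}$, hence finite-dimensional and therefore complete for any norm; it remains only to check that $\langle p,q\rangle_\mu=\int_F p(f)q(f)\,d\mu(f)$ is actually an inner product (and not merely a semi-inner product) on $P_{d,n}$. Writing $p=\p^T\v_{d,n}$ and $q=\q^T\v_{d,n}$, one has $\langle p,q\rangle_\mu=\p^T\M_{d,n}(\mu)\,\q$, so positive-definiteness of $\langle\cdot,\cdot\rangle_\mu$ on $P_{d,n}$ is exactly the standing assumption that the moment matrix $\M_{d,n}(\mu)$ is non-singular (equivalently positive definite). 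Thus $(P_{d,n},\langle\cdot,\cdot\rangle_\mu)$ is a finite-dimensional Hilbert space whose elements are polynomial functions on $H$, and on such a space every evaluation functional $p\mapsto p(g)$, $g\in H$, is automatically continuous; so a reproducing kernel exists, and the content of the lemma is that it is the explicit one in \eqref{cd-kernel}.

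For the second point, fix $g\in H$ and consider $f\mapsto K^\mu_{d,n}(f,g)=\v_{d,n}(f)^T\M_{d,n}(\mu)^{-1}\v_{d,n}(g)$. Since $\M_{d,n}(\mu)^{-1}\v_{d,n}(g)\in\R^{n+d\choose n}$ is a fixed vector, this is a polynomial of algebraic degree at most $d$ and harmonic degree at most $n$, i.e. an element of $P_{d,n}$ with coefficient vector $\M_{d,n}(\mu)^{-1}\v_{d,n}(g)$. Then for any $p=\p^T\v_{d,n}\in P_{d,n}$ I would compute directly, using the definition \eqref{matrix-mom} of the moment matrix,
\[
\langle p,\,K^\mu_{d,n}(\cdot,g)\rangle_\mu=\Big(\int_F \p^T\v_{d,n}(f)\,\v_{d,n}(f)^T\,d\mu(f)\Big)\,\M_{d,n}(\mu)^{-1}\v_{d,n}(g)=\p^T\M_{d,n}(\mu)\M_{d,n}(\mu)^{-1}\v_{d,n}(g)=p(g),
\]
which is precisely the reproducing property. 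Taking $p=K^\mu_{d,n}(\cdot,g')$ then yields $K^\mu_{d,n}(g',g)=\langle K^\mu_{d,n}(\cdot,g),K^\mu_{d,n}(\cdot,g')\rangle_\mu$, whence symmetry $K^\mu_{d,n}(f,g)=K^\mu_{d,n}(g,f)$ and positive semi-definiteness of the kernel follow; alternatively both are immediate from symmetry and positive-definiteness of $\M_{d,n}(\mu)^{-1}$. This identifies $K^\mu_{d,n}$ as the reproducing kernel of $(P_{d,n},\langle\cdot,\cdot\rangle_\mu)$.

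I do not expect a genuine obstacle here. The only step needing real care — and the reason non-singularity of $\M_{d,n}(\mu)$ was imposed — is the claim that $\langle\cdot,\cdot\rangle_\mu$ is definite on $P_{d,n}$, equivalently that no nonzero polynomial of bounded algebraic and harmonic degree vanishes $\mu$-almost everywhere on $F$. Everything else is the classical finite-dimensional Christoffel–Darboux computation transplanted verbatim: the infinite-dimensionality of $H$ enters only through the fact that the arguments $f,g$ range over $H$, but since $K^\mu_{d,n}$ and every element of $P_{d,n}$ depend on $f$ solely through the finitely many coordinates $\langle f,e_1\rangle,\dots,\langle f,e_n\rangle$ packaged into $\v_{d,n}(f)$, this causes no difficulty.
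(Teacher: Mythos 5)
Your proof is correct and follows essentially the same route as the paper: the central step in both is the direct computation $\langle p,K^\mu_{d,n}(\cdot,g)\rangle_\mu=\p^T\M_{d,n}(\mu)\M_{d,n}(\mu)^{-1}\v_{d,n}(g)=p(g)$ using the definition \eqref{matrix-mom} of the moment matrix. You are in fact slightly more complete than the paper, which only verifies the reproducing property and continuity (the latter via Cauchy--Schwarz, where you instead invoke finite-dimensionality), whereas you also make explicit that non-singularity of $\M_{d,n}(\mu)$ is exactly what makes $\langle\cdot,\cdot\rangle_\mu$ a genuine inner product on $P_{d,n}$.
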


\begin{proof}
The linear functional $p \mapsto \langle p(.), K^{\mu}_d(.,g)\rangle_{\mu}$ has the reproducing property:
\begin{align*}
\forall p \in P_{d,n}, \quad\langle p(.), K^{\mu}_{d,n}(.,g)\rangle_{\mu} & = \int_F p(f)K^{\mu}_{d,n}(f,g)\,d\mu(f) \\
& = \int_F p(f)\,\v_{d,n}(f)^T \M_{d,n}(\mu)^{-1} \,\v_{d,n}(g)\, d\mu(f)\\
& = \p^T \left(\int_F \v_{d,n}(f)\, \b_{d,n}(x)^T d\mu(f)\right) \M_{d,n}(\mu)^{-1} \,\v_{d,n}(g) \\
& = \p^T \,\v_{d,n}(g) \,=\, p(g)\,,
\end{align*}
and it is continuous:
\[
\forall p \in P_{d,n}, \quad \langle p(.), K^{\mu}_{d,n}(.,g)\rangle_{\mu}^2 \leq \int_F p(f)^2\,d\mu(f) \int_F K^{\mu}_d(f,f)\,d\mu(f).
\]
\end{proof}
\begin{remark}(Degenerate case)
\label{degenerate}
It may happen that $F\subset H$ is contained in a finite-dimensional subspace of $H$. For instance this is the case if there exists $n_*\in\N$ such that 
$\langle f,e_k\rangle=0$ for all $f\in F$ and all $k>n_*$. 
Then $F\subset V$, where $V\subset H$ is the finite-dimensional subspace
associated with the coordinates $\langle f,e_k\rangle$, $k\leq n_*$. In this case the 
moment matrix $\M_{d,n}(\mu)$ in \eqref{matrix-mom} is singular as soon as $n>n_*$. In restricting to 
moments associated with harmonic degree $n\leq n_*$ one is back to the 
classical finite-dimensional studied in e.g. \cite{lpp22}. 

Infinite-dimensional functions $f$ that are specified by finitely many coordinates $\langle f,e_k\rangle$ can be handled as finite-dimensional objects. This is the essence of the Representer Theorem often used in machine learning, see e.g. \cite[Section 5.3]{hr24} in the context of polynomial optimization.
\end{remark}

\section{Infinite-dimensional Christoffel function}

Let $F\subset H$ and $\mu$ be a measure supported on $F$.
The degree $(d,n)$ Christoffel function (CF) associated with $\mu$ is defined by
\begin{equation}\label{cf}
\begin{array}{llll}
\Lambda^{\mu}_{d,n} \: : \: & H & \to & [0,1] \\
& h  & \mapsto &  \displaystyle \min_{p \in P_{d,n}} \int_F p^2(f)\,d\mu(f) \quad\text{s.t.}\quad p(h)\,=\,1\,.
\end{array}
\end{equation}

\begin{lemma}
Assume that $\M_{d,n}(\mu)$ is non singular. Then
for each $h \in H$, the minimum is
\[
\Lambda^{\mu}_{d,n}(h) = \frac{1}{K^{\mu}_{d,n}(h,h)} = \frac{1}{p^{\mu}_{d,n}(h)}\,
\]
and the minimum is achieved at the polynomial
\[
f\mapsto p(f) \,=\, \frac{K^{\mu}_{d,n}(f,h)}{K^{\mu}_{d,n}(h,h)}\,,\quad\forall f\,\in\, H\,.
\]
Finally we also have the alternative formulation
\begin{equation}
    \label{ABC}\Lambda^\mu_{d,n}(h)^{-1}\,=\,\v_{d,n}(h)^T\M_{d,n}(\mu)^{-1}\v_{d,n}(h)\,.
\end{equation}
\end{lemma}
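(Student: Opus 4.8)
The plan is to reduce the variational problem \eqref{cf} at a fixed degree $(d,n)$ to a finite-dimensional quadratic program in the coefficient vector and then to solve it by the Cauchy--Schwarz inequality. Fix $h\in H$ and write each $p\in P_{d,n}$ as $p(\cdot)=\p^T\v_{d,n}(\cdot)$. By the definition \eqref{matrix-mom} of the moment matrix, $\int_F p^2(f)\,d\mu(f)=\p^T\M_{d,n}(\mu)\,\p$, and the normalization $p(h)=1$ becomes the linear constraint $\p^T\v_{d,n}(h)=1$. Since $P_{d,n}$ contains the constant polynomial $1$, the vector $\v_{d,n}(h)$ of basis values at $h$ is nonzero (otherwise every $p\in P_{d,n}$ would vanish at $h$), so the feasible set is a nonempty affine hyperplane. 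As $\M_{d,n}(\mu)$ is positive semidefinite and non singular it is in fact positive definite, hence $\p\mapsto\p^T\M_{d,n}(\mu)\,\p$ is coercive and the minimum
\[
\Lambda^\mu_{d,n}(h)\,=\,\min\{\,\p^T\M_{d,n}(\mu)\,\p\;:\;\p^T\v_{d,n}(h)=1\,\}
\]
is attained.

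Next I would apply the Cauchy--Schwarz inequality for the inner product $(a,b)\mapsto a^T\M_{d,n}(\mu)\,b$ (an inner product precisely because $\M_{d,n}(\mu)$ is positive definite) to the vectors $\p$ and $\M_{d,n}(\mu)^{-1}\v_{d,n}(h)$:
\[
1\,=\,\big(\p^T\v_{d,n}(h)\big)^2\,\leq\,\big(\p^T\M_{d,n}(\mu)\,\p\big)\big(\v_{d,n}(h)^T\M_{d,n}(\mu)^{-1}\v_{d,n}(h)\big),
\]
with equality if and only if $\p$ is proportional to $\M_{d,n}(\mu)^{-1}\v_{d,n}(h)$. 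Since $\M_{d,n}(\mu)^{-1}$ is positive definite and $\v_{d,n}(h)\neq 0$, the factor $\v_{d,n}(h)^T\M_{d,n}(\mu)^{-1}\v_{d,n}(h)$ is strictly positive, so rearranging gives $\Lambda^\mu_{d,n}(h)\geq\big(\v_{d,n}(h)^T\M_{d,n}(\mu)^{-1}\v_{d,n}(h)\big)^{-1}$, and the equality case shows this lower bound is achieved. This establishes \eqref{ABC}, and comparison with \eqref{cd-kernel} and \eqref{cd-polynomial} yields $\Lambda^\mu_{d,n}(h)=1/K^\mu_{d,n}(h,h)=1/p^\mu_{d,n}(h)$.

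Finally, to exhibit the optimal polynomial I would take the unique feasible vector proportional to $\M_{d,n}(\mu)^{-1}\v_{d,n}(h)$, namely $\p=\M_{d,n}(\mu)^{-1}\v_{d,n}(h)\big/\big(\v_{d,n}(h)^T\M_{d,n}(\mu)^{-1}\v_{d,n}(h)\big)$, so that
\[
p(f)\,=\,\p^T\v_{d,n}(f)\,=\,\frac{\v_{d,n}(f)^T\M_{d,n}(\mu)^{-1}\v_{d,n}(h)}{\v_{d,n}(h)^T\M_{d,n}(\mu)^{-1}\v_{d,n}(h)}\,=\,\frac{K^\mu_{d,n}(f,h)}{K^\mu_{d,n}(h,h)}\,.
\]
The whole argument is finite-dimensional once the coefficient parametrization is in place, so there is no genuine obstacle; the only points deserving a word of care are that positive semidefiniteness together with non singularity upgrades to positive definiteness (needed for Cauchy--Schwarz and for $\M_{d,n}(\mu)^{-1}$ to exist) and that $\v_{d,n}(h)$ never vanishes (needed for feasibility and for $K^\mu_{d,n}(h,h)>0$). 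The real content of the lemma is that, at fixed degree $(d,n)$, the infinite-dimensional nature of $H$ is invisible: everything takes place in the finite-dimensional RKHS $P_{d,n}$, exactly as in the classical Christoffel--Darboux theory.
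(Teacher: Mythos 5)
Your proof is correct, and it is at bottom the same Cauchy--Schwarz argument as the paper's, just carried out in coordinates. The paper works intrinsically in $L^2(\mu)$: it invokes the reproducing property $p(h)=\int_F K^\mu_{d,n}(f,h)\,p(f)\,d\mu(f)$ established in the preceding RKHS lemma, applies Cauchy--Schwarz there to get $1\leq K^\mu_{d,n}(h,h)\int_F p^2\,d\mu$, and then checks that $p(\cdot)=K^\mu_{d,n}(\cdot,h)/K^\mu_{d,n}(h,h)$ is feasible and attains the bound; the identity \eqref{ABC} then falls out of the definition \eqref{cd-kernel} of the kernel. You instead parametrize by the coefficient vector and apply Cauchy--Schwarz for the $\M_{d,n}(\mu)$-weighted inner product on $\R^{n+d\choose n}$ to the pair $\p$ and $\M_{d,n}(\mu)^{-1}\v_{d,n}(h)$, which is the same inequality transported through the isometry $p\mapsto\p$. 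What your route buys is self-containedness (no appeal to the RKHS lemma), an explicit argument that the minimum is attained (coercivity of the positive definite quadratic form), the equality case of Cauchy--Schwarz giving uniqueness of the minimizer, and the observation that $\v_{d,n}(h)\neq 0$ so that $K^\mu_{d,n}(h,h)>0$ --- a point the paper leaves implicit. What the paper's route buys is that it exhibits the mechanism (the reproducing property) that generalizes beyond the finite-dimensional coefficient picture and connects the lemma to the RKHS structure it has just set up. Both are complete; no gap in yours.
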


\begin{proof}
Let $h\in H$ be fixed, arbitrary, and let $p\in P_{d,n}$ be a feasible solution of \eqref{cf}. Then
\begin{align*}
1 = p^2(h) & = \left(\int_F K^{\mu}_{d,n}(f,h)p(f)\,d\mu(f)\right)^2 
\quad\mbox{[by the reproducing property]}\\
& \leq \int_F K^{\mu}_{d,n}(f,h)^2\, d\mu(f)
\int_F p^2(f)d\mu(f) 
\quad\mbox{[by Cauchy-Schwarz]}\\
& = K^{\mu}_{d,n}(h,h) \int_F p^2(f)\,d\mu(f)\,,
\end{align*}
and therefore
\[\Lambda^\mu_{d,n}(h) \geq \frac{1}{K^\mu_{d,n}(h,h)}.\]
On the other hand, observe that the polynomial
\[p(.):=\frac{K^{\mu}_{d,n}(.,h)}{K^{\mu}_{d,n}(h,h)} \in P_{d,n}
\]
is admissible for problem \eqref{cf}, i.e. $p(h)=1$, and therefore
\[
\Lambda^{\mu}_{d,n}(h) \leq \int_F \frac{K^{\mu}_{d,n}(f,h)^2}{K^{\mu}_{d,n}(h,h)^2}\,d\mu(f) = \frac{1}{K^{\mu}_{d,n}(h,h)}\,.
\]
\end{proof}

\begin{lemma}\label{average}
The CD polynomial in \eqref{cd-polynomial} has average value
\[
\int_X p^{\mu}_{d,n}(f) \,d\mu(f) \,=\, {n+d\choose n}\,.
\]
\end{lemma}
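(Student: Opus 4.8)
The plan is to use the trace (Gram-matrix) representation of the CD polynomial together with plain linearity of the integral; no infinite-dimensional subtlety is really involved, because for fixed $(d,n)$ the vector $\v_{d,n}$ has only finitely many ${n+d\choose n}$ components. Recall from \eqref{ABC} (equivalently from \eqref{cd-polynomial}--\eqref{cd-kernel}) that
\[
p^{\mu}_{d,n}(f) \,=\, \v_{d,n}(f)^T \M_{d,n}(\mu)^{-1} \v_{d,n}(f) \,=\, \mathrm{trace}\big(\M_{d,n}(\mu)^{-1}\,\v_{d,n}(f)\,\v_{d,n}(f)^T\big),
\]
where the second equality uses that a scalar equals its own trace together with the cyclic property of the trace.

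Next I would integrate both sides against $\mu$. Since $\M_{d,n}(\mu)^{-1}$ is a fixed matrix independent of $f$, and every entry of $\v_{d,n}(f)\,\v_{d,n}(f)^T$ is a polynomial $v_i(f)v_j(f)$ on $F$, hence continuous and bounded on the compact set $F$ and therefore $\mu$-integrable, the (finite, entrywise) integral commutes with the trace and with left multiplication by $\M_{d,n}(\mu)^{-1}$:
\[
\int_F p^{\mu}_{d,n}(f)\, d\mu(f) \,=\, \mathrm{trace}\Big(\M_{d,n}(\mu)^{-1} \int_F \v_{d,n}(f)\,\v_{d,n}(f)^T\, d\mu(f)\Big) \,=\, \mathrm{trace}\big(\M_{d,n}(\mu)^{-1}\,\M_{d,n}(\mu)\big),
\]
the last step being exactly the definition \eqref{matrix-mom} of the moment matrix $\M_{d,n}(\mu)$.

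Finally, invoking the standing assumption that $\M_{d,n}(\mu)$ is non singular, the product $\M_{d,n}(\mu)^{-1}\M_{d,n}(\mu)$ is the identity matrix of size $\dim P_{d,n} = {n+d\choose n}$, whose trace is ${n+d\choose n}$; this yields the claim. The only point deserving a word of justification — the main, albeit mild, obstacle — is the interchange of the integral with the trace and the matrix multiplication; as noted above this is legitimate because the trace is a finite $\R$-linear combination of the entries $v_i(f)v_j(f)$, each of which is a polynomial, hence continuous and bounded on the compact set $F$ and thus $\mu$-integrable. (One may alternatively, and equivalently, expand $p^{\mu}_{d,n} = \sum_i s_{ii}^{-1} q_i^2$ from \eqref{cd-kernel}--\eqref{cd-polynomial} and use $\int_F q_i^2\, d\mu = \q_i^T \M_{d,n}(\mu)\q_i = s_{ii}$, summing the ${n+d\choose n}$ resulting ones.)
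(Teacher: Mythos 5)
Your proof is correct and follows essentially the same route as the paper: write $p^{\mu}_{d,n}(f)$ as a trace via the cyclic property, interchange the integral with the trace to recover $\M_{d,n}(\mu)^{-1}\M_{d,n}(\mu)=I$, and read off the binomial coefficient as the trace of the identity. Your extra justification of the interchange (finitely many polynomial entries, continuous hence integrable on the compact set $F$) and the alternative expansion via $\sum_i s_{ii}^{-1}q_i^2$ are fine but not needed beyond what the paper records.
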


\begin{proof}
\begin{align*}
\int_F p^{\mu}_{d,n}(f) \,d\mu(f) & = 
\int_F \v_{d,n}(f)^T\M_{d,n}(\mu)^{-1}\,\v_{d,n}(f)\,d\mu(f) \\
& = \text{trace}\left(\M_{d,n}(\mu)^{-1}\cdot\int_F \v_{d,n}(f) \,\v_{d,n}(f)^T\, d\mu(f)\right) \\
& = \text{trace}\:I_{n+d\choose n} \,=\, {n+d\choose n}\,.
\end{align*}
\end{proof}

\begin{remark}
\label{remark-singular}
(i) It may happen that $\M_{d,n}(\mu)$ is singular, e.g. as in the case 
described in Remark \ref{degenerate}, or when $\mu$ is obtained as the empirical measure $\mu_N$ supported on a \emph{finite} sample of $N$ trajectories recorded in a database $\mathscr{D}$ (our motivation discussed in the introduction). In the latter case one,
as one uses the CF associated with $\mu_N$ (and not the underlying $\mu$)
one must be careful. Indeed the couple degree-sample size $\{(d,n),N\}$ cannot be arbitrary 
if the empirical CF $\Lambda^{\mu_N}_{d,n}$ has to inherit the same properties as
$\Lambda^\mu_{d,n}$. For every fixed degree $(d,n)$, a large sample size $N$ is enough.
Indeed by invoking the Strong Law of Large Numbers and continuity of eigenvalues,
$\M_{d,n}(\mu_N)$ can be as close to $\M_{d,n}(\mu)$ as desired, almost surely (with respect to samples) provided that $N$ is sufficiently large \cite[Lemma Theorem 6.2.3]{lpp22}.

(ii) For practical computation, and as in the finite-dimensional case \cite{lpp22}, one uses the regularized version
$\M_{d,n}(\mu)+\varepsilon\,\mathbf{I}$ of the moment matrix, for some $\varepsilon>0$, which ensures that 
it is always non singular. So if $\v_{d,n}(f)\in\mathrm{Ker}(\M_{d,n}(\mu))$ then $\Lambda^\mu_{d,n}(f)=0$ while
its regularized version is very small as desired. This allows to treat numerically in the same manner the singular and non singular cases.

(iii) For matching asymptotic properties 
of $\Lambda^\mu_{d,n}$ and $\Lambda^{\mu_N}_{d,n}$ as $d\wedge n\to\infty$, a more delicate analysis is required to provide a receipe to choose $N$ and $(d,n)$ accordingly, as done in 
\cite[Section 6.3]{lpp22} and \cite{esaim} in the finite-dimensional case.  Notice that for practical purposes (and computational reasons) one is restricted to relatively small degree $(d,n)$ and so a large sample $N$ is enough to ensure that $\Lambda^{\mu_N}_{d,n}$ is a valid proxy for $\Lambda^\mu_{d,n}$. In any case, and for numerical reasons, in practice one replaces the inverse of the moment matrix with 
$(\M_{d,n}(\mu)+\varepsilon\,\mathbf{I}_{{n+d\choose n}})^{-1}$ for small $\varepsilon>0$, which always exists; see for instance \cite{lpp22} in the finite-dimensional case.
\end{remark}

\begin{table}[ht]
\begin{center}
\begin{tabular}{|c|c|}
\hline
\hline
&\\
Standard CF $\Lambda^\mu_d$& New CF $\Lambda^{\mu}_{d,n}$\\
&\\
\hline
&\\
point $x\in \R^{n}$ & point $f\in H$\\
&\\
\hline
point evaluation at $x$ 
& point evaluation at $f$\\
$\delta_{x}(p)\,=\,\langle \p,\v_d(x)\rangle$ &  $\delta_{f}(p)\,=\,\langle \p,\v_{d,n}(f)\rangle$\\
$p\in\R[x]_d$ & $p\in P_{d,n}$\\
&\\
\hline
&\\
measure $\mu$ on $X\subset\R^{n}$ & measure $\mu$ on $F\subset H$ \\
&\\
\hline
&\\
moment matrix & moment matrix \\
$\mathbf{M}_d(\mu)=\displaystyle\int_X \v_d(x)\,\v_d(x)^T\,d\mu(x)$ &
$\mathbf{M}_{d,n}(\mu)\,=\displaystyle\int_F \v_{d,n}(f)\,\v_{d,n}(f)^T\,d\mu(f)$\\
&\\
\hline
&\\
$\Lambda^\mu_d(x)^{-1}=\v_d(x)^T\,\mathbf{M}_d(\mu)^{-1}\,\v_d(x)$ &  
$\Lambda^\mu_{d,n}(f)^{-1}
=\v_{d,n}(f)^T\M_{d,n}(\mu)^{-1}\,\v_{d,n}(f)$
\\
&\\
\hline
\hline
\end{tabular}
\caption{Comparing the standard CF with the new CF.}
\label{tab1}
\end{center}
\end{table}
In Table \ref{tab1} we compare the standard CF in finite dimension with the new CF in infinite dimension. When $x\in\R^n$, the vector $\v_d(x)$ is the vector of 
the usual monomial basis of the vector space $\R[x]_d$ of polynomials of degree up to $d$.

\subsection{Asymptotic properties}
In this section we show that the new infinite-dimensional version $\Lambda^\mu_{d,n}$ on $H$ of the standard
finite-dimensional Christoffel function on Euclidean spaces, also satisfies a distinguished dichotomy property that will prove crucial to detect abnormal trajectories (as it is crucial to detect outliers in the finite-dimensional setting \cite{neurips,lp19,lpp22}).
Let $d\wedge n:=\min(d,n)$.

\begin{lemma}\label{point}
For all $h \in F$, it holds $\lim_{d\wedge n\to\infty} \Lambda^{\mu}_{d,n}(h) = \mu(\{h\})$.
\end{lemma}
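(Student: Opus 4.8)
The plan is to mimic the classical finite-dimensional argument (as in \cite{lpp22}) showing that the Christoffel function converges pointwise to the mass of the atom, while being careful that here the relevant compactness and density statements live in the infinite-dimensional Hilbert space $H$. Throughout, fix $h\in F$ and write $\lambda:=\mu(\{h\})$. Both inequalities $\liminf_{d\wedge n\to\infty}\Lambda^\mu_{d,n}(h)\geq\lambda$ and $\limsup_{d\wedge n\to\infty}\Lambda^\mu_{d,n}(h)\leq\lambda$ will be established separately, and monotonicity of $\Lambda^\mu_{d,n}(h)$ in $d$ and in $n$ (which follows since $P_{d,n}\subseteq P_{d',n'}$ whenever $d\leq d'$, $n\leq n'$, so the feasible set in \eqref{cf} grows and the minimum decreases) guarantees that the limit exists and equals the infimum over all $(d,n)$.

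For the easy lower bound: any $p\in P_{d,n}$ feasible for \eqref{cf} satisfies $p(h)=1$, hence $\int_F p^2\,d\mu\geq p(h)^2\,\mu(\{h\})=\lambda$, so $\Lambda^\mu_{d,n}(h)\geq\lambda$ for every $(d,n)$, and in particular in the limit. The substance is in the upper bound, where I must exhibit, for every $\epsilon>0$, a polynomial $p\in P_{d,n}$ with $p(h)=1$ and $\int_F p^2\,d\mu\leq\lambda+\epsilon$ for $d\wedge n$ large enough. The idea is the usual one: build a polynomial that is $1$ at $h$ and uniformly small on $F\setminus B(h,\delta)$ for a small ball $B(h,\delta)$, while controlling $\mu(B(h,\delta)\setminus\{h\})\to 0$ as $\delta\to 0$ (by countable additivity / regularity of $\mu$, since $\{h\}=\bigcap_{\delta>0}\overline{B(h,\delta)}$). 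To construct such a polynomial I would start from the continuous function $f\mapsto\langle f-h,f-h\rangle=|f-h|^2$ on $F$; however this is \emph{not} a polynomial in the sense of the paper (it is an infinite sum $\sum_k\langle f-h,e_k\rangle^2$), so I instead use its truncation $q_n(f):=\sum_{k=1}^n\langle f-h,e_k\rangle^2\in P_{2,n}$, which \emph{is} a polynomial of algebraic degree $2$ and harmonic degree $n$. By Proposition~\ref{compact} (the Hilbert-space compactness criterion), $\sup_{f\in F}\big(|f-h|^2-q_n(f)\big)\to 0$ as $n\to\infty$ — one must check the tails $\sum_{k>n}\langle f-h,e_k\rangle^2$ are uniformly small on the compact set $F$, which is exactly what Proposition~\ref{compact} gives after noting $|f-h|^2-q_n(f)=\big(|f|^2-|\pi_n f|^2\big)-2\langle f-\pi_n f,h\rangle+\cdots$ can be bounded uniformly. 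Hence for $n$ large, $q_n$ is within $\epsilon'$ of $|f-h|^2$ uniformly on $F$; pick $\delta$ with $\mu(B(h,\delta)\setminus\{h\})<\epsilon$ and then take $p(f):=\big(1-q_n(f)/(2\delta^2)\big)^d$ or, more cleanly, a univariate polynomial in the single "variable" $q_n(f)$ that equals $1$ at $q_n=0$ and is bounded by $\eta^d$ on $\{q_n\geq\delta^2/2\}$; this $p$ lies in $P_{2d,n}$. Splitting $\int_F p^2\,d\mu$ over $\{h\}$, $B(h,\delta)\setminus\{h\}$, and $F\setminus B(h,\delta)$ gives $\leq\lambda+\mu(B(h,\delta)\setminus\{h\})+\eta^{2d}\mu(F)\leq\lambda+2\epsilon$ for $d$ large, completing the estimate.

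The main obstacle I anticipate is the polynomial-approximation step: producing a genuine element of $P_{d,n}$ (a finite linear combination of monomials $f^a$) that plays the role of a "bump at $h$", given that the natural candidate — the squared distance $|f-h|^2$ — is itself not a polynomial. The resolution is precisely the harmonic truncation $q_n$ together with Proposition~\ref{compact}, which is why the limit must be taken as $d\wedge n\to\infty$ rather than in $d$ alone: one needs $n$ large to make $q_n$ a faithful proxy for the true distance uniformly over $F$, and then $d$ large to sharpen the univariate bump. A secondary technical point is confirming that $\mu(B(h,\delta)\setminus\{h\})\to 0$ as $\delta\downarrow 0$; this is continuity of the measure from above applied to the decreasing sequence of sets $\overline{B(h,1/m)}\setminus\{h\}$, all of finite measure since $\mu$ is a (finite, indeed probability) measure on $F$. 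With these two ingredients in hand the rest is the routine three-way split of the integral and a limit argument, so I would present the construction of $q_n$ and the univariate bump carefully and treat the remaining estimates briefly.
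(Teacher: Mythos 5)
Your proposal follows essentially the same route as the paper's proof: the lower bound by restricting the integral to the atom $\{h\}$, and the upper bound via the bump polynomial $(1-|\pi_n(f-h)|^2)^d$ built from the harmonic truncation of the squared distance, with the truncation error controlled uniformly on $F$ via Proposition~\ref{compact} and the small ball's measure controlled by continuity from above. The only cosmetic differences are that the paper ties the ball radius to $d$ (namely $d^{-1/4}$) instead of fixing $\delta$ by an $\epsilon$-argument, and your normalized/Chebyshev variant of the bump is, if anything, slightly more careful about keeping the test polynomial bounded on all of $F$.
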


\begin{proof}
Let $h \in F$.
First observe that $\Lambda^{\mu}_{d,n}(h)$ is bounded below and non-increasing i.e. $\Lambda^{\mu}_{d'\wedge n'}(h) \leq \Lambda^{\mu}_{d \wedge n}(h)$ whenever $d\wedge n \leq d'\wedge n'$, so $\lim_{d\wedge n\to\infty} \Lambda^{\mu}_{d,n}(h)$ exists.
If $p$ is admissible for problem \eqref{cf}, it holds
\[
\int_F p^2(f)d\mu(f) \leq p^2(h)\mu(\{h\}) = \mu(\{h\})\,,
\] 
so $\lim_{d\wedge n\to\infty} \Lambda^{\mu}_{d,n}(h) \leq \mu(\{h\})$.
Conversely, for given $d,n \in \N$, let
\[
p(f):=(1-|\pi_n(f-h)|^2)^d.
\]
Note that $p(.) \in P_{2d,n}$ and $p(h)=1$ so that $p(.)$ is admissible for problem \eqref{cf} and
\begin{align*}
\Lambda^{\mu}_{2d+1,n}(h) \leq \Lambda^{\mu}_{2d,n}(h) & \leq \int_F p(f)^2 d\mu(f)  \\
& \leq \int_{B(h,d^{-\frac14})}d\mu(f) + \int_{F\setminus B(h,d^{-\frac14})} (1-|\pi_n(f-h)|^2)^{2d} d\mu(f)
\end{align*}
where $B(h,r):=\{f \in F : |f-h| \leq r\}$.
For all $f \in F\setminus B(h,d^{-\frac14})$, using \eqref{np} it holds
\[
|f-h|^2=\sum_{k=1}^n \langle f-h, e_k \rangle^2 + \sum_{k=n+1}^{\infty} \langle f-h, e_k \rangle^2  \geq d^{-\frac12}
\]
and hence
\[
(1-|\pi_n(f-h)|^2)^{2d} = (1-\sum_{k=1}^n \langle f-h, e_k \rangle^2)^{2d} \leq 
(1-d^{-\frac12}+\sum_{k=n+1}^{\infty} \langle f-h, e_k \rangle^2)^{2d}
\]
from which it follows that
\[
\lim_{n\to\infty} \int_{F\setminus B(h,d^{-\frac14})} (1-|\pi_n(f-h)|^2)^{2d} d\mu(f) \leq \lim_{n\to\infty} (1-d^{-\frac12}+\sum_{k=n+1}^{\infty} \langle f-h, e_k \rangle^2)^{2d} = (1-d^{-\frac12})^{2d}.
\]
Combining these asymptotic expressions we get
\[
\lim_{d\wedge n\to\infty} \Lambda^{\mu}_{d,n}(h) \leq \lim_{d\to\infty} \int_{B(h,d^{-\frac14})}d\mu(f) + (1-d^{-\frac12})^{2d} = \mu(\{h\}).
\]
\end{proof}

If $\mu$ is absolutely continuous with respect to e.g. the Gaussian measure restricted to $F\subset H$ \cite{b98,d06}, then 
it follows from Lemmas \ref{average} and \ref{point} that the Christoffel function on $F$ decreases to zero linearly with respect to the dimension of the vector space $P_{d,n}$. Equivalently, on $F$, the CD polynomial $p^\mu_{d,n}$ increases linearly with respect to the dimension. This is in sharp contrast with its exponential growth outside $F$, captured by the following result.

\begin{lemma}\label{outside}
Let $d,n \in \N$. For all $h \in H$ such that $\min_{f \in F}|\pi_n(f-h)| \geq \delta > 0$ it holds
\[
p^{\mu}_{d,n}(h) \geq 2^{\frac{\delta}{\delta+\text{diam}\:F}d-3}\,,
\]
where $\text{diam}\:F:=\max_{f_1,f_2 \in F} |f_1-f_2|$ is finite since $F$ is compact.
\end{lemma}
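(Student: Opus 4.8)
The plan is to exhibit a single admissible polynomial $p \in P_{d,n}$ of the Christoffel problem at $h$ (i.e. with $p(h)=1$) for which $\int_F p^2\,d\mu$ is exponentially small in $d$; since $\Lambda^\mu_{d,n}(h) \le \int_F p^2\,d\mu$ and $p^\mu_{d,n}(h) = 1/\Lambda^\mu_{d,n}(h)$, this immediately gives the exponential lower bound on $p^\mu_{d,n}(h)$. The natural candidate, mirroring the finite-dimensional Christoffel estimates (and Lemma \ref{point}), is a polynomial built from the ``harmonic-degree-$n$ squared distance'' $\ell(f) := |\pi_n(f-h)|^2 = \sum_{k=1}^n \langle f-h,e_k\rangle^2$, which is a genuine element of $P_{2,n}$. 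The idea is to use a rescaled Chebyshev-type polynomial $T_d$ in the single real variable $\ell(f)$: choose an affine map $\ell \mapsto a\,\ell + b$ sending the interval $[0,(\mathrm{diam}\,F)^2]$ (an upper bound for the range of $\ell$ on $F$, since $|\pi_n(f-h)| \le |f-h| \le \mathrm{diam}\,F + |h-f_0|$; here I would actually want $\ell$ bounded by something like $(\delta + \mathrm{diam}\,F)^2$ using that some point of $F$ is within $\mathrm{diam}\,F$ of the nearest point) onto $[-1,1]$, while $\ell = 0$ (the value at $f=h$... wait, $h \notin F$) — more precisely $\ell(h) = 0$ gets mapped outside $[-1,1]$, to a point $1 + 2\delta^2/(\ldots)$ or so, where $|T_d|$ is exponentially large. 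Then $p := T_d(a\,\ell(\cdot)+b)/T_d(a\cdot 0 + b)$ is admissible, $p \in P_{2d,n} \subseteq P_{d',n}$ after renaming $d' = 2d$ (this accounts for the $d/(\ldots)$ versus $2d$ bookkeeping and the $-3$ in the exponent), and $|p(f)| \le 1/|T_d(b)|$ for all $f \in F$, so $\int_F p^2\,d\mu \le 1/T_d(b)^2$ since $\mu$ is a probability measure.

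The key steps, in order, are: (1) record that on $F$, $\ell(f) = |\pi_n(f-h)|^2$ lies in an interval $[0, M^2]$ for an explicit $M$, and that the hypothesis $\min_{f\in F}|\pi_n(f-h)| \ge \delta$ means $\ell(f) \ge \delta^2$ on $F$, whereas $\ell(h) = 0$; (2) pick the affine change of variable so that the interval $[\delta^2, M^2]$ maps into $[-1,1]$ and $\ell = 0$ maps to the point $1 + \eta$ with $\eta = 2\delta^2/(M^2-\delta^2)$ (one checks $\eta \ge$ something like $\delta/(\delta+\mathrm{diam}\,F)$ after choosing $M = \delta + \mathrm{diam}\,F$, using $M^2 - \delta^2 = \mathrm{diam}\,F\,(2\delta + \mathrm{diam}\,F)$ and a crude bound); (3) invoke the standard lower bound for Chebyshev polynomials outside $[-1,1]$, namely $T_d(1+\eta) \ge \tfrac12(1+\sqrt{2\eta})^d \ge \tfrac12 \, 2^{\sqrt{2\eta}\,d/(\text{something})}$ — more simply $T_d(1+\eta) \ge \tfrac12 e^{d\sqrt{2\eta}} \ge \tfrac12 \cdot 2^{d\sqrt{2\eta}/\ln 2 \cdot \ln 2}$; here the cleanest route is the bound $T_d(1+\eta)\ge 2^{d\sqrt{\eta/2}-1}$ or similar, matched to produce the stated exponent $\tfrac{\delta}{\delta+\mathrm{diam}\,F}\,d - 3$; (4) assemble: $p^\mu_{d,n}(h) = 1/\Lambda^\mu_{2d,n}(h) \ge T_d(b)^2 \ge 2^{\,\frac{\delta}{\delta+\mathrm{diam}\,F}d - 3}$, being careful that $P_{2d,n}$ vs.\ $P_{d,n}$ only improves the bound (Christoffel functions are non-increasing in degree).

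The main obstacle I anticipate is the constant-chasing in step (3): getting the exponent to come out exactly as $\frac{\delta}{\delta+\mathrm{diam}\,F}\,d$ requires choosing $M$ and the affine map just right and then bounding $\sqrt{2\eta}$ below by $\frac{\delta}{\delta+\mathrm{diam}\,F}$ (or a quantity comparable to it, with the slack absorbed into the additive $-3$). Concretely, with $M = \delta + \mathrm{diam}\,F$ one has $\eta = \frac{2\delta^2}{(\delta+\mathrm{diam}\,F)^2 - \delta^2} = \frac{2\delta^2}{\mathrm{diam}\,F\,(2\delta+\mathrm{diam}\,F)}$, and one needs $\sqrt{2\eta} = \frac{2\delta}{\sqrt{\mathrm{diam}\,F\,(2\delta+\mathrm{diam}\,F)}} \ge \frac{\delta}{\delta+\mathrm{diam}\,F}$, i.e. $4(\delta+\mathrm{diam}\,F)^2 \ge \mathrm{diam}\,F\,(2\delta+\mathrm{diam}\,F)$, which is easily true. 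Everything else is routine: $\ell$ is a bona fide polynomial of algebraic degree $2$ and harmonic degree $\le n$, composing with the degree-$d$ univariate $T_d$ keeps us in $P_{2d,n}$, and $\mu(F)=1$ converts the pointwise bound on $p$ on $F$ into the integral bound. I would also double-check the edge case where $\mathrm{diam}\,F = 0$ (a single point) separately, though then $F=\{f_0\}$ and the claim is immediate.
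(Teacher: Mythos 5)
Your proposal is correct and follows essentially the same route as the paper: both construct the admissible test polynomial $f\mapsto T_d(\cdot)$ composed (after an affine rescaling by $\delta+\mathrm{diam}\,F$) with the degree-$(2,n)$ polynomial $|\pi_n(f-h)|^2$, bound its $L^2(\mu)$ norm by its sup on $F$ using the Chebyshev growth estimate outside $[-1,1]$ (which the paper imports from \cite[Lemma 6.3]{lp19} rather than deriving by hand), and handle the even/odd degree bookkeeping to absorb the slack into the additive $-3$. The upper-bound subtlety you flag for $|\pi_n(f-h)|\le \delta+\mathrm{diam}\,F$ is present (and silent) in the paper's proof as well, and is resolved exactly as you suggest, via the triangle inequality through the nearest point together with monotonicity of the conclusion in $\delta$.
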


\begin{proof}
Let $\delta \in (0,1)$ and let
\[
q(f):=\frac{T_d(1+\delta^2-|\pi_n(f)|^2)}{T_d(1+\delta^2)}
\]
where $T_d$ is the univariate Chebyshev polynomial of the first kind of degree $d \in \N$. This polynomial of $f \in H$ is such that 
\begin{itemize}
\item $q(0)=1$,
\item $|q(f)| \leq 1$ whenever $|\pi_n(f)| \leq 1$,
\item $|q(f)| \leq 2^{1-\delta d}$ whenever $0 < \delta \leq |\pi_n(f)| \leq 1$,
\end{itemize}
see \cite[Lemma 6.3]{lp19}. 
Now let
\[p(f):=q\left(\frac{f-h}{\delta+\text{diam}\:F}\right),
\quad \bar{\delta} := \frac{\delta}{\delta+\text{diam}\:F}\,,
\]
If $h\in H$ and $\min_{f \in F}|\pi_n(f-h)| \geq \delta > 0$, then \[0 < \bar{\delta} \leq \left|\pi_n\left(\frac{f-h}{\delta+\text{diam}\:F}\right)\right| \leq 1\,,\]
for all $f \in F$. Note that $p(.) \in P_{2d,n}$ and $p(h)=1$ so that $p(.)$ is admissible in problem \eqref{cf} and hence
\[
\Lambda^{\mu}_{2d,n}(h) \leq \int_F p(f)^2 d\mu(f)
\leq \int_F (2^{1-\bar{\delta}d})^2 d\mu(f) = 2^{2-2\bar{\delta}d} \leq 2^{3-2\bar{\delta}d}.
\]
Also $\Lambda^{\mu}_{2d+1,n}(h) \leq \Lambda^{\mu}_{2d,n}(h) \leq  2^{3-2\bar{\delta}d}$ and since $\bar{\delta} < 1$, it holds $\Lambda^{\mu}_{2d+1,n}(h) \leq 2^{3-\bar{\delta}(2d+1)}$, from which we conclude that $\Lambda^{\mu}_{d,n}(h) \leq 2^{3-\bar{\delta}d}$.

\end{proof}

\subsection{Updating the empirical CF when new data are available}

In the case where $\mu$ is an empirical measure supported supported on the union of trajectories $g\in\mathscr{D}$
(see our motivation in the introduction)  and if an additional trajectory $g_0$ is added to $\mathscr{D}$ (that is,  the support becomes $\mathscr{D}\cup\{g_0\}$)
then as in the finite-dimensional case, the CF $\Lambda^\mu_{d,n}$ is easily updated.
This is because the inverse of the new moment matrix 
is a rank-one update of the initial one, and so 
is easily obtained the Sherman-Morrison-Woodbury formula.

More precisely, let the new trajectory be the element $g_0\in H$ with representation
\[g_0\,=\,\sum_{k=1,2,\ldots} \langle g_0,e_k)\,e_k\,.\]
Then if $\#\mathscr{D}=N$ and $\mu_{N+1}$ is the new empirical measure supported on 
$\mathscr{D}':=\mathscr{D}\cup\{g_0\}$, the inverse of its associated moment matrix satisfies
\begin{eqnarray*}
\left((N+1)\,\M_{d,n}(\mu_{N+1})\right)^{-1}
&=&\left(N\,\M_{d,n}(\mu_{N})+\underbrace{\v_{d,n}(g_0)\v_{d,n}(g_0)^T}_{\mbox{rank-one}}\right)^{-1}\\
&=&\frac{1}{N}(\M_{d,n}(\mu_N)^{-1}-\frac{1}{N^2}\frac{\M_{d,n}(\mu_N)^{-1}\v_{d,n}(g_0)\v_{d,n}(g_0)^T\M_{d,n}(\mu_N)^{-1}}{1+\frac{1}{N}\v_{d,n}(g_0)\M_{d,n}(\mu_N)^{-1}\v_{d,n}(g_0)}\,,
\end{eqnarray*}
and it can be computed efficiently. Indeed note that 
in this  rank-one update, the corrective term only requires computing $\M^{-1}_{d,n}(\mu_N)\v_{d,n}(g_0)$ (one matrix-vector multiplication).

In particular one also obtains the formula:
\[\frac{\Lambda^{\mu_{N+1}}_{d,n}(g)^{-1}}{N+1}\,=\,\frac{\Lambda^{\mu_{N}}_{d,n}(g)^{-1}}{N}-
\frac{\frac{1}{N^2}K^{\mu_N}_{d,n}(g,g_0)^2}{1+\frac{1}{N}\Lambda^{\mu^N}_{d,n}(g_0)^{-1}}\,,\quad\forall g\in H\,.\]

\section{Numerical illustration}

We designed a prototype Matlab code
\begin{center}\url{http://homepages.laas.fr/henrion/software/hilbertcd/chebcd.zip}\end{center}
using the {\tt chebfun} package to manipulate Chebyshev polynomials \cite{chebfun}. Our Hilbert space $H$ is $W^{1,2}([-1,1])$, the Sobolev space of real valued functions on the interval $[-1,1]$ whose weak derivatives are square integrable. Our orthonormal system consists of scaled Chebyshev polynomials of the first kind $e_1=1$, $e_k := \sqrt{2}T_{k-1}$, $k=2,3,\ldots$ 
with $T_1(t):=\sqrt{2}\,t, T_2(t)=\sqrt{2}(2t^2-1), \ldots$ and the scalar product 
\[\langle f, e_k \rangle = \int_{-1}^1 \frac{f(t)T_{k-1}(t)dt}{\pi\sqrt{1-t^2}}\,,\quad\forall f\in\, H\,.\] 
Hence every function $f \in H$ can be expressed as a Chebyshev series $f = \sum_{k=1,2,\ldots} c_k e_k$ where $(c_k = \langle f,e_k \rangle)_{k=1,2,\ldots}\subset \R$ are its associated Chebyshev coefficients.

\begin{example}
\label{ex1}
In a first illustrative example:

-- We fix the algebraic degree $d \in \N$ and harmonic degree $n \in \N$, 

-- we choose a nominal function $g_0 \in H$, and 

-- generate an \emph{empirical}  moment matrix $\M_{d,n}(\mu_N)$ obtained from randomly generated sample functions $g_i$, $i=1,\ldots,N$ around $g_0$:
\[
\M_{d,n}(\mu_N) = \frac{1}{N}\sum_{i=1}^N \v_{d,n}(g_i)\,\v_{d,n}(g_i)^T\,,
\]
where $N$ is much larger than the dimension ${n+d\choose n}$ of the vector space $P_{d,n}$ of polynomials of algebraic degree up to $d$ and harmonic degree up to $n$. Basis $\v_{d,n}(.)$ consists of monomials.

For example, if $d=n=4$ the moment matrix has size ${6\choose 3}=70$. We choose a nominal function $g_0 = \frac{1}{3}(T_1+T_2+T_3)$ and the $N=10^3$ samples $g_i=g_0 + \epsilon_1 T_1 +  \epsilon_2 T_2 +  \epsilon_3 T_3$, $i=1,\ldots,N$ are generated for $\epsilon \in \R^3$ sampled uniformly in the Euclidean ball of radius $1/10$.

Now if we generate a function randomly with the same distribution, the CD polynomial evaluated at this function should have a value around $70$, consistently with Lemma \ref{average}. 

In contrast, if we generate an outlier function using a larger (say by an order of magnitude) perturbation around the nominal function, the CD polynomial evaluated at this function should be much larger, consistently with Lemma \ref{average}. 

\begin{figure}
\begin{center}
\includegraphics[width=0.8\textwidth]{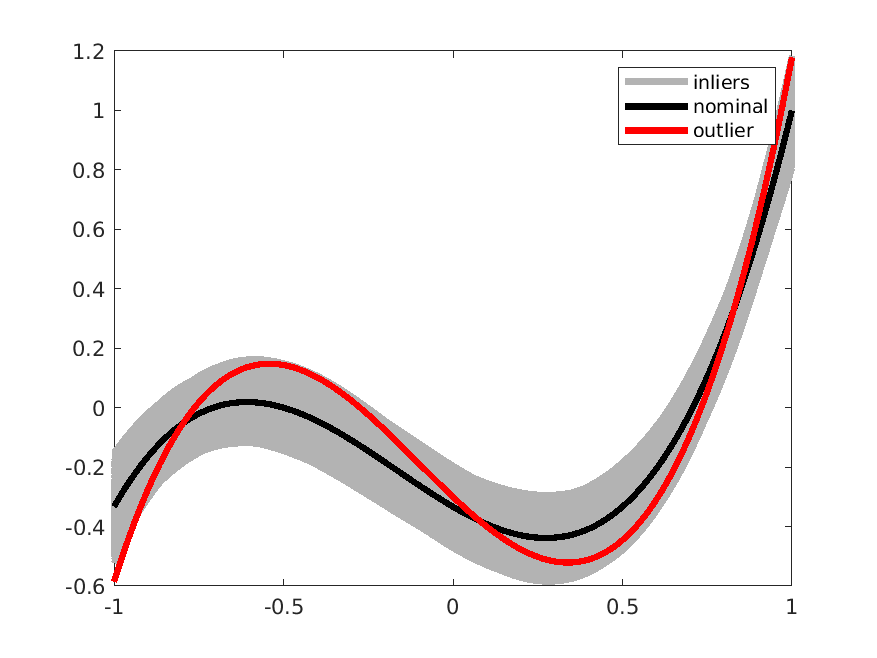}
\caption{Inlier samples (gray), nominal function (black) and outlier function (red).\label{fig:chebinout}}
\end{center}
\end{figure}

\begin{figure}
	\begin{center}
		\includegraphics[width=0.8\textwidth]{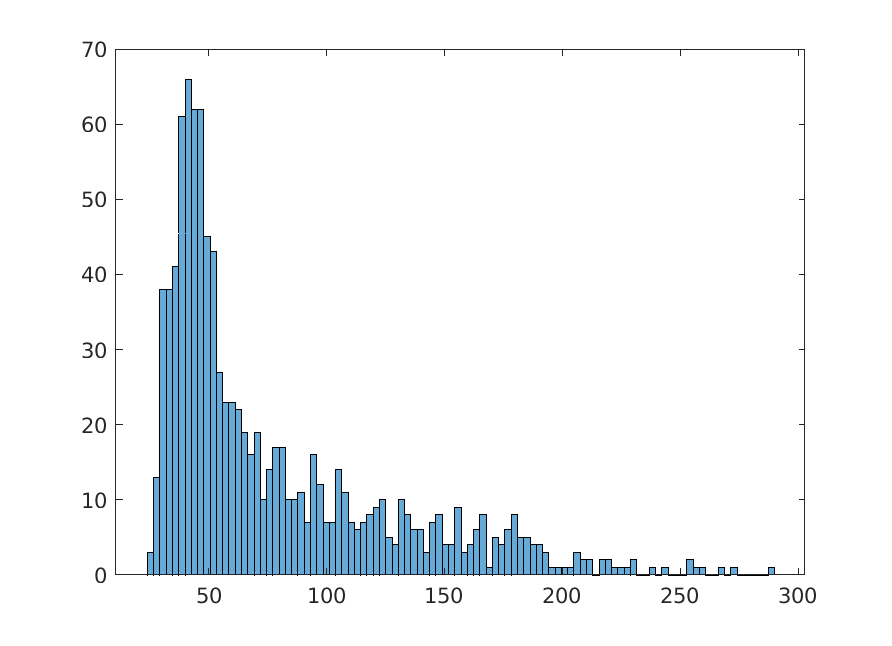}
		\caption{Histogram of values of the CD polynomial evaluated at $10^3$ inlier samples.\label{fig:chebhisto}}
	\end{center}
\end{figure}

On Figure \ref{fig:chebinout} we represent in gray a large number of inliner functions $f_i$, $i=1,2,\ldots,N$ sampled from the same distribution used to generate the moment matrix. In black we represent the nominal function $g_0$.  In red we represent an outlier function $f$, at which the Christoffel-Darboux polynomial takes the large value $3904$, consistently with Lemma \ref{average}.  On Figure \ref{fig:chebhisto} we represent the histogram of values of the CD polynomial at the samples. The average value is $75$, consistently with Lemma \ref{average}. 

This example also showcases why the naive approach alluded to in Remark \ref{rem:naive} fails. 
Indeed observe that the graph of the outlier in red in Figure \ref{fig:chebinout} is contained in the geometrical support of the inliers trajectories in grey  (seen as  points in $\R^2$ and not to be confused with the support of $\mu$ which is an infinite-dimensional object); however it is an \textit{outlier}. 

\begin{figure}
\begin{center}
\includegraphics[width=0.8\textwidth]{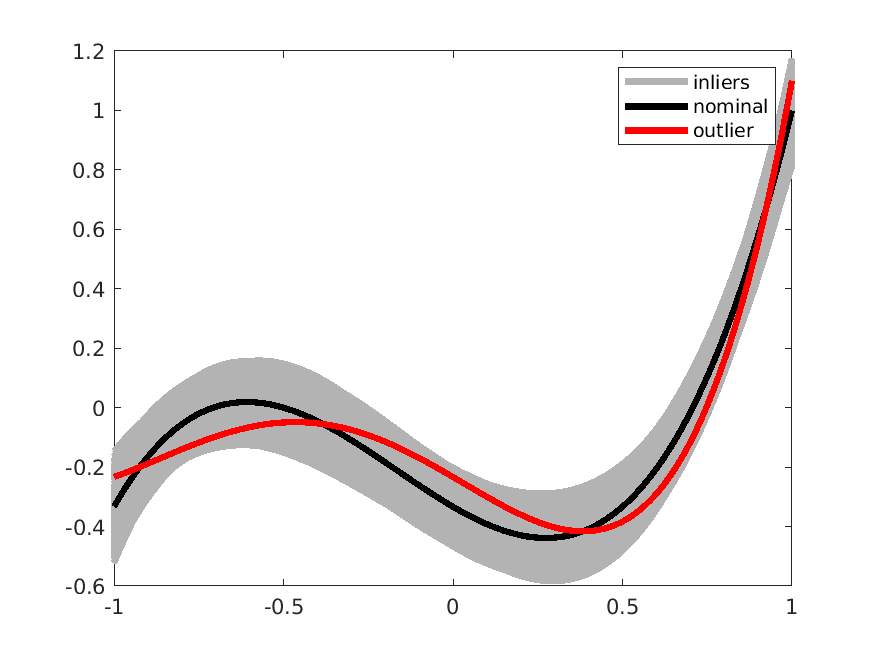}
\caption{Inlier samples (gray), nominal function (black) and outlier function (red).\label{fig:chebout}}
\end{center}
\end{figure}
\end{example}
\begin{example}
We next consider an educational example to illustrate how 
a trajectory $f$ obtained from the nominal trajectory $g_0$ of harmonic degree $n=4$
in Example \ref{ex1}, by adding a perturbation that contains an harmonic not present in 
$g_0$ (e.g. $\langle f,e_{n+1}\rangle\neq 0$) is immediately detected as an outlier. Indeed
consider the empirical moment matrix generated from the same uniform measure as above, but this time fixing the algebraic degree $d=1$ and harmonic degree $n=5$. The moment matrix reads:
\[
\M_{d,n}(\mu_N) = \left(\begin{array}{rrrrrr} 
1 & -0.0024 & 0.3315 & 0.3326  & 0.3352 &    0 \\
-0.0024  &  0.0016 &  -0.0007  & -0.0008  & -0.0008 &        0 \\
    0.3315  & -0.0007  &  0.1116 &   0.1102 &   0.1111      &   0 \\
    0.3326 &  -0.0008 &   0.1102  &  0.1123 &   0.1115   &     0 \\
    0.3352 &  -0.0008 &  0.1111 &  0.1115   & 0.1141   &      0 \\
         0   &      0   &      0    &     0    &     0    &     0
         \end{array}\right)
\]
and its last row and column are identically zero since the samples have harmonic degree $4$ only.
The outlier $g_0+\epsilon T_4$ has harmonic degree $5$, and the value of the CF at this function is zero (and hence the CD polynomial has infinite value) since the moment matrix is not invertible. Yet, on Figure \ref{fig:chebout} we see that the outlier lies in the geometrical support of the inlier trajectories in grey. 
As mentioned in Remark \ref{remark-singular}(ii), in
practice one uses the regularized (and always non singular) version $\M_{d,n}(\mu)+\varepsilon\,\mathbf{I}$ of $\M_{d,n}(\mu)$, in which case the value of the CD polynomial is very large, as desired to detect outliers.

On the other hand, this detection is effective no matter how large is the 
magnitude of the harmonic component $\langle f,e_5\rangle$ whereas the trajectory $f$ is almost indistinguishable graphically from $g_0$ when this component is small, which  may not be 
desirable in a practical situation.
\end{example}

\section{Conclusion}
We have introduced an infinite-dimensional  Christoffel-Darboux polynomial kernel and its associated Christoffel function  on a separable Hilbert space (i.e. whose arguments are infinite-dimensional). It turns out that dichotomy property of the finite-dimensional kernel growth with respect to its degree  (in or outside the support of the underlying measure) is also valid for this infinite-dimensional version. This allows to treat some problems in data analysis (e.g. outlier detection) where the data now consist of a set of reference trajectories (e.g. functions of time in $L^2([0,1])$) instead of 
a set of points in $\R^n$.

\end{document}